\newtheorem{theorem}{Theorem}[section]
\newtheorem{lemma}[theorem]{Lemma}
\newtheorem{proposition}[theorem]{Proposition}
\newtheorem{corollary}[theorem]{Corollary}
\theoremstyle{definition}
\theoremstyle{approach}
\numberwithin{equation}{section}
\begin{document}
\setcounter{page}{1}
\title[Bochner-Schoenberg-Eberlein-type inequality
of the direct sum, ideals and quotient of Fr\'echet algebras]{Bochner-Schoenberg-Eberlein-type inequality
 of the direct sum, ideals and quotient of Fr\'echet algebras}
\author[M. Amiri and  A. Rejali]{M. Amiri and A. Rejali}
\subjclass[2010]{46J05, 46J20}
\keywords{{\text BSE}-algebra, commutative
 Fr\'echet algebra, multiplier algebra.}

\begin{abstract}

Let $\mathcal A$ and $\mathcal B$ be two commutative semisimple 
Fr\'echet algebras. We first give a characterization of the multiplier
algebra of the direct sum of $\mathcal A$ and $\mathcal B$. We then
prove that $\mathcal A\oplus \mathcal B$ is a {\text BSE}-algebra 
if and only if $\mathcal A$ and $\mathcal B$ are {\text BSE}-algebras.
Furthermore, for a closed ideal $I$ of $\mathcal A$, we study 
multipliers of ideals and quotient algebras of $\mathcal A$ and show that $I$
and $\frac{\mathcal A}{I}$ are {\text BSE}-algebras, under certain conditions.
\end{abstract}
 
\maketitle \setcounter{section}{0}

\section{Introduction and Preliminaries}

{\text BSE} stands for the theorem Bochner-Schoenberg-Eberlein.  
This theorem proved for locally compact Abelian groups \cite{3} and was 
generalized for commutative Banach algebras, by Takahasi and Hatori \cite{9, 11}. Moreover,  
it developed by other authors such as Inoue and Takahasi \cite{5} and later by 
Kaniuth and \"Ulger \cite{7}. We studied the {\text BSE} 
property for the commutative Fr\'echet algebras; see \cite{1}. 

Let $\mathcal A$ be a commutative Banach algebra without order and the
space $\Delta(\mathcal A)$ denotes the set of all 
nonzero multiplicative linear functionals on $\mathcal A$ 
with respect to the Gelfand-topology. A bounded continuous function $\sigma$ 
on $\Delta(\mathcal A)$ is
called a {\text BSE}-function if there exists a positive real number $\beta$ such 
that for every finite number of complex-numbers $c_1,\cdots, c_n$ and the same 
number of $\varphi_1,\cdots, \varphi_n$ in $\Delta(\mathcal A)$ the inequality
$$\big{\vert}\sum_{i=1}^{n} c_{i}\sigma(\varphi_{i})\big{\vert}\leq\beta
\parallel\sum_{i=1}^{n} c_{i}\varphi_{i}\parallel_{\mathcal A^{\ast}}$$
holds. Following \cite{12}, the {\text BSE} norm of $\sigma$, denoted by $\parallel \sigma\parallel_{\text{BSE}}$, is defined to be the infimum of all such $\beta$. The set of all {\text BSE}-functions is denoted by 
$C_{\text{BSE}}(\Delta(\mathcal A))$ where it is a commutative semisimple Banach algebra, under $\parallel .\parallel_{\text{BSE}}$.

Let us recall from \cite{8,9} that a Fr\'echet space is a completely metrizable locally convex space
where its topology is generated by a translation
invariant metric.
Furthermore, following \cite{4},
a complete topological algebra $\mathcal A$ is a Fr\'echet algebra
if its topology is produced by a countable family of increasing
submultiplicative seminorms $(p_\ell)_{\ell \in \mathbb N}$.
The class of Fr\'echet algebras is 
an important class of locally convex algebras has been widely studied
by many authors. 
Note that every Fr\'echet (algebra) space is not necessarily
a Banach (algebra) space. Some differences between
Banach and Fr\'echet (algebras) spaces introduced in the survey paper \cite{2}.

Let $(\mathcal{A},p_{\ell})_{\ell \in \mathbb N}$ be a Fr\'echet algebra.
Consider $\mathcal{A}^*$ 
the topological dual of $\mathcal A$. The strong topology on $\mathcal{A}^*$
is generated by seminorms $(P_M)$ where $M$ is a bounded set in $\mathcal A$; see \cite{9} for more details.
Following \cite{1}, a bounded complex-valued
continuous function $\sigma$ defined on $\Delta(\mathcal A)$ is called a
{\text BSE}-function, if there exist a bounded set $M$ in $\mathcal A$ and 
a positive real number $\beta_{M}$ such that 
for every finite number of complex-numbers
$c_1,\cdots, c_n$ and the same number of 
$\varphi_1,\cdots, \varphi_n$ in $\Delta(\mathcal A)$ the inequality
$$
\vert\sum_{i=1}^{n} c_{i}\sigma(\varphi_{i})\vert\leq\beta_{M}
P_{M}(\sum_{i=1}^{n}c_{i}\varphi_{i})
$$
holds. Moreover by \cite[Theorem 3.3]{1}, 
$C_{\text{BSE}}(\Delta(\mathcal A))$ is a commutative 
semisimple Fr\'echet algebra and
$$C_{\text{BSE}}(\Delta(\mathcal A))=\mathcal A^{**}
\vert_{\Delta(\mathcal A)}\cap C_{b}(\Delta(\mathcal A)).$$

Let now $(\mathcal A,p_\ell)$ be a commutative Fr\'echet algebra. 
A linear operator $T$ on $\mathcal A$ is called a multiplier if it satisfies 
$aT(b)=T(ab)$, for all $a,b\in \mathcal A$. The set $\mathrm{M}(\mathcal A)$ of 
all multipliers of $\mathcal A$ with the strong operator topology, is a 
commutative unital complete locally convex algebra and not necessarily Fr\'echet algebra; see
\cite[Proposition 4.1]{1}.
Analogous to the Banach 
case, for each $T\in \mathrm{M}(\mathcal A)$, there exists a unique continuous function
$\widehat{T}$ on $\Delta(\mathcal A)$ such that
$\varphi(T(a))=\widehat{T}(\varphi)\varphi(a)$ for all $a\in\mathcal A$ 
and $\varphi \in\Delta(\mathcal A)$. The algebra
$\mathcal A$ is called {\text BSE} algebra
if $$\widehat{\mathrm{M}(\mathcal A)}=C_{\text{BSE}}(\Delta(\mathcal A)),$$ where 
$\widehat{\mathrm{M}(\mathcal A)}=\lbrace\widehat{T}:T\in M(\mathcal A)\rbrace.$
A bounded net $(e_\alpha)_{\alpha\in\Lambda}$ in $\mathcal A$ 
is called a $\Delta$-weak approximate identity for $\mathcal A$
if it satisfies $\varphi(e_{\alpha})\longrightarrow_\alpha 1$ or equivalently 
$\varphi(e_{\alpha}a)\longrightarrow_\alpha \varphi(a)$ for every $a \in \mathcal A$
and $\varphi\in\Delta(\mathcal A)$. Moreover, $\mathcal A$ has a bounded $\Delta$-weak approximate identity if and only 
if $\widehat{\mathrm{M}(\mathcal A)}\subseteq C_{\text{BSE}}(\Delta(\mathcal A)).$
In addition,
$$\mathcal{M}(\mathcal A)=\big{\lbrace} \Phi:\Delta(\mathcal A)
\rightarrow \mathbb{C}: \text{$\Phi$ is continuous and $\Phi\cdot \widehat{\mathcal A} \subseteq \widehat{\mathcal A}$}\big{\rbrace}.$$
This is another definition of the multiplier algebra of Fr\'echet algebras. 
If $\mathcal A$ is a commutative semisimple Fr\'echet algebra, then
$\mathcal{M}(\mathcal A)=\widehat{\mathrm{M}(\mathcal A)}
=\mathrm{M}(\mathcal A)$. 
In the present paper, we show that
$\mathcal{M}(\mathcal A \oplus \mathcal B)=\mathcal{M}(\mathcal A)
\times\mathcal{M}(\mathcal B)$ and
$$C_{\text{BSE}}(\Delta(\mathcal A \oplus\mathcal B))=
C_{\text{BSE}}(\Delta(\mathcal A))\times C_{\text{BSE}}(\Delta(\mathcal B)).$$
Additionally, we prove that 
$\mathcal A\oplus \mathcal B$ is a {\text BSE}-algebra if and
only if $\mathcal A$ and $\mathcal B$ are {\text BSE}. We also
offer conditions under which closed ideals and quotient algebras 
of {\text BSE} algebras are {\text BSE}-algerbras.
\section{The multiplier algebra for the direct sum of Fr\'echet algebras}

The direct sum of Banach algebras studied in \cite{6}. In the following, 
we generalized it for Fr\'echet algebras. Let $(\mathcal A,r_\ell)$ 
and $(\mathcal B,s_\ell)$ be two commutative
Fr\'echet algebras. Then,
$\mathcal A \oplus \mathcal B=\mathcal A \times \mathcal B$ defined by
\begin{enumerate}
	\item[(i)]$(a_1,a_2)\cdot (b_1,b_2)=(a_1 b_1,a_2 b_2), \,\,for\,\,
	a_1,a_2 \in {\mathcal A},\, b_1,b_2 \in {\mathcal B};$
	\item[(ii)]$P_{\ell}(a,b)=r_{\ell} (a)+s_{\ell} (b), \,\,for\,\, a\in{\mathcal
		A},\,b \in{\mathcal B}\,\,and\,\,\ell\in \mathbb N;$
	\item[(iii)]$(a_1,a_2)+(b_1,b_2)=(a_1+ b_1 , a_2+ b_2).$
\end{enumerate}
By a standard argument, the following is straightforward.  

\begin{lemma} 
	Let $(\mathcal A, r_\ell)$ and $(\mathcal B,s_\ell)$ be two Fr\'echet algebras. Then,
	\begin{enumerate}
		\item[(i)]$(\mathcal A\oplus\mathcal B,P_\ell)$ is a Fr\'echet algebra.
		\item[(ii)]$(\mathcal A\oplus\mathcal B)^{\ast}=
		{\mathcal A}^{\ast}\oplus{\mathcal B}^{\ast}$ as homeomorphism.
		\item[(iii)]$\Delta(\mathcal A\oplus \mathcal B)=\big{(}\Delta(\mathcal A)
		\times\lbrace 0 \rbrace\big{)} \bigcup \big{(}\lbrace 0 \rbrace \times \Delta(\mathcal B)\big{)}.$
		\item[(iv)]$\mathcal A\oplus \mathcal B$ is semisimple if and only if
		both $\mathcal A$ and $\mathcal B$ are semisimple.
	\end{enumerate}
\end{lemma}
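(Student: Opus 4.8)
The plan is to verify each of the four assertions directly from the coordinatewise definitions, with (iii) as the one point requiring a small computation.

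For (i), I would first observe that each $P_\ell$ is a seminorm (a sum of seminorms), that the family $(P_\ell)$ is increasing because $(r_\ell)$ and $(s_\ell)$ are, and that it is separating: if $P_\ell(a,b)=0$ for all $\ell$, then $r_\ell(a)=0=s_\ell(b)$ for all $\ell$, whence $(a,b)=(0,0)$ by the Hausdorffness of $\mathcal A$ and $\mathcal B$. The only computation worth recording is submultiplicativity: for $a_i\in\mathcal A$, $b_i\in\mathcal B$,
\[
P_\ell\bigl((a_1,a_2)\cdot(b_1,b_2)\bigr)=r_\ell(a_1b_1)+s_\ell(a_2b_2)\le r_\ell(a_1)r_\ell(b_1)+s_\ell(a_2)s_\ell(b_2)\le\bigl(r_\ell(a_1)+s_\ell(a_2)\bigr)\bigl(r_\ell(b_1)+s_\ell(b_2)\bigr)=P_\ell(a_1,a_2)\,P_\ell(b_1,b_2),
\]
the last step using that all the seminorm values are non-negative. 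Completeness follows because a $(P_\ell)$-Cauchy net has coordinate projections that are $(r_\ell)$- and $(s_\ell)$-Cauchy, hence convergent in $\mathcal A$ and $\mathcal B$, and the pair of limits is the limit in $\mathcal A\oplus\mathcal B$.

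For (ii), given $f\in(\mathcal A\oplus\mathcal B)^*$ I would set $f_1(a):=f(a,0)$ and $f_2(b):=f(0,b)$, so that $f_1\in\mathcal A^*$, $f_2\in\mathcal B^*$ and $f(a,b)=f_1(a)+f_2(b)$; conversely every pair in $\mathcal A^*\times\mathcal B^*$ arises this way, giving the algebraic identification. For the homeomorphism claim with respect to the strong topologies, the key remark is that a subset of $\mathcal A\oplus\mathcal B$ is bounded if and only if both of its coordinate projections are bounded (immediate from $P_\ell(a,b)=r_\ell(a)+s_\ell(b)$), so the defining families of seminorms $(P_M)$ on $(\mathcal A\oplus\mathcal B)^*$ and on $\mathcal A^*\times\mathcal B^*$ generate the same topology.

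For (iii), write $\varphi\in\Delta(\mathcal A\oplus\mathcal B)$ as $\varphi(a,b)=\varphi_1(a)+\varphi_2(b)$ by (ii). Evaluating the multiplicativity identity on pairs of the form $(a,0),(b,0)$ shows $\varphi_1$ is multiplicative on $\mathcal A$, and symmetrically $\varphi_2$ on $\mathcal B$; subtracting these from the general identity $\varphi\bigl((a_1,a_2)(b_1,b_2)\bigr)=\varphi(a_1,a_2)\varphi(b_1,b_2)$ leaves $\varphi_1(a_1)\varphi_2(b_2)+\varphi_2(a_2)\varphi_1(b_1)=0$ for all choices, and putting $a_2=0$ forces $\varphi_1\equiv0$ or $\varphi_2\equiv0$. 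Since $\varphi\ne0$, exactly one of $\varphi_1,\varphi_2$ is a character and the other is $0$, placing $\varphi$ in $\bigl(\Delta(\mathcal A)\times\{0\}\bigr)\cup\bigl(\{0\}\times\Delta(\mathcal B)\bigr)$; the reverse inclusion is clear, and one checks the Gelfand topology restricts correctly to each piece, since evaluation at $(a,b)$ decomposes as evaluation at $a$ plus evaluation at $b$. Finally, (iv) is a consequence of (iii): an element $(a,b)$ lies in $\mathrm{rad}(\mathcal A\oplus\mathcal B)=\bigcap_{\varphi}\ker\varphi$ if and only if $\psi(a)=0$ for all $\psi\in\Delta(\mathcal A)$ and $\chi(b)=0$ for all $\chi\in\Delta(\mathcal B)$, i.e. $\mathrm{rad}(\mathcal A\oplus\mathcal B)=\mathrm{rad}(\mathcal A)\oplus\mathrm{rad}(\mathcal B)$, which vanishes precisely when both summands do.

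The main obstacle is the character computation in (iii) — deducing that one of the two functionals must be identically zero — together with the (routine but necessary) check that the Gelfand topology on $\Delta(\mathcal A\oplus\mathcal B)$ induces the Gelfand topologies on the two disjoint copies; everything else is bookkeeping that the phrase ``by a standard argument'' is meant to cover.
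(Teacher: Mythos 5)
Your proof is correct, and it is exactly the ``standard argument'' the paper invokes without writing out: the paper offers no proof of this lemma, and your coordinatewise verification of the seminorm estimates, the splitting $f(a,b)=f(a,0)+f(0,b)$ of functionals, the character computation forcing $\varphi_1\equiv 0$ or $\varphi_2\equiv 0$, and the identification of the radical of the direct sum is precisely what is meant to be covered by that phrase. No gaps; the submultiplicativity estimate and the $a_2=0$ specialization in (iii) are the only points needing care, and you handle both correctly.
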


\begin{lemma}\label{g1}
	Let $\mathcal A$ and $\mathcal B$ be two commutative Fr\'echet algebras. Then,
	$$C_{\text{BSE}}(\Delta(\mathcal A\oplus \mathcal B))=C_{\text{BSE}}
	(\Delta(\mathcal A))\times C_{\text{BSE}}(\Delta(\mathcal B)).$$
\end{lemma}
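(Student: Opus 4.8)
The plan is to use the explicit descriptions of $(\mathcal A\oplus\mathcal B)^{\ast}$ and $\Delta(\mathcal A\oplus\mathcal B)$ from the preceding lemma to split both the topology and the BSE inequality into an $\mathcal A$-part and a $\mathcal B$-part. By the preceding lemma every character of $\mathcal A\oplus\mathcal B$ is either $(\varphi,0)$ with $\varphi\in\Delta(\mathcal A)$, acting by $(\varphi,0)(a,b)=\varphi(a)$, or $(0,\psi)$ with $\psi\in\Delta(\mathcal B)$, acting by $(0,\psi)(a,b)=\psi(b)$. First I would record the easy topological point that the Gelfand topology on $\Delta(\mathcal A\oplus\mathcal B)$ is the topological sum of the two copies $\Delta(\mathcal A)\times\{0\}$ and $\{0\}\times\Delta(\mathcal B)$: a weak$^{\ast}$-limit of a net $(\varphi_\alpha,0)$ taken inside $\Delta(\mathcal A\oplus\mathcal B)$ cannot be of the form $(0,\psi)$ with $\psi\neq 0$, since evaluating at the points $(0,b)$ would force $\psi(b)=\lim_\alpha 0=0$ for all $b$; hence each of the two pieces is closed, so (the two pieces being disjoint with union the whole space) also open. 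Consequently $C_{b}(\Delta(\mathcal A\oplus\mathcal B))$ is identified with $C_{b}(\Delta(\mathcal A))\times C_{b}(\Delta(\mathcal B))$ via $\sigma\mapsto(\sigma_1,\sigma_2)$, where $\sigma_1(\varphi)=\sigma(\varphi,0)$ and $\sigma_2(\psi)=\sigma(0,\psi)$, and it remains only to match the BSE side-conditions. (Alternatively one could invoke $C_{\text{BSE}}(\Delta(\mathcal A))=\mathcal A^{\ast\ast}|_{\Delta(\mathcal A)}\cap C_{b}(\Delta(\mathcal A))$ together with $(\mathcal A\oplus\mathcal B)^{\ast\ast}=\mathcal A^{\ast\ast}\oplus\mathcal B^{\ast\ast}$, but the direct verification below is self-contained.)

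For the inclusion $\supseteq$, take $\sigma_1\in C_{\text{BSE}}(\Delta(\mathcal A))$ with constant $\beta_1$ and bounded set $M_1$, and $\sigma_2\in C_{\text{BSE}}(\Delta(\mathcal B))$ with constant $\beta_2$ and bounded set $M_2$; after enlarging them we may assume $0\in M_1$ and $0\in M_2$. Set $M=M_1\times M_2$, which is bounded in $\mathcal A\oplus\mathcal B$ because $P_\ell(a,b)=r_\ell(a)+s_\ell(b)$. Given a finite combination $h=\sum_i c_i(\varphi_i,0)+\sum_j d_j(0,\psi_j)=(F,G)$ with $F=\sum_i c_i\varphi_i$ and $G=\sum_j d_j\psi_j$, one has
$$
\Big|\sum_i c_i\sigma_1(\varphi_i)+\sum_j d_j\sigma_2(\psi_j)\Big|\le\beta_1P_{M_1}(F)+\beta_2P_{M_2}(G),
$$
while taking $b=0$ (resp. $a=0$) in the supremum defining $P_M$ gives $P_M(h)=\sup_{a\in M_1,\,b\in M_2}|F(a)+G(b)|\ge\max\{P_{M_1}(F),P_{M_2}(G)\}$. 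Combining the two estimates, $\big|\sum_i c_i\sigma(\varphi_i,0)+\sum_j d_j\sigma(0,\psi_j)\big|\le(\beta_1+\beta_2)\,P_M(h)$, so $\sigma=(\sigma_1,\sigma_2)\in C_{\text{BSE}}(\Delta(\mathcal A\oplus\mathcal B))$.

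For the reverse inclusion, let $\sigma\in C_{\text{BSE}}(\Delta(\mathcal A\oplus\mathcal B))$ with constant $\beta$ and bounded set $M$, and put $M_1=\{a:(a,b)\in M\text{ for some }b\}$, which is bounded in $\mathcal A$ since $r_\ell(a)\le P_\ell(a,b)$ on $M$. Feeding combinations of the form $\sum_i c_i(\varphi_i,0)$ into the BSE inequality for $\sigma$ and noting $P_M\big(\sum_i c_i(\varphi_i,0)\big)=\sup_{(a,b)\in M}\big|\sum_i c_i\varphi_i(a)\big|=P_{M_1}\big(\sum_i c_i\varphi_i\big)$ shows that $\sigma_1\in C_{\text{BSE}}(\Delta(\mathcal A))$; the argument for $\sigma_2$ is symmetric. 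With the identification of the first paragraph this proves the asserted equality (and, by tracking the constants, that it respects the Fr\'echet-algebra structures). I expect the only genuinely delicate points to be the handling of the strong-dual seminorms $P_M$ — namely that bounded subsets of $\mathcal A\oplus\mathcal B$ project onto bounded subsets of each factor, and that a product $M_1\times M_2$ containing $0$ in each factor simultaneously dominates $P_{M_1}$ and $P_{M_2}$ — everything else being routine once the preceding lemma is in hand.
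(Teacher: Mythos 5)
Your proposal is correct and follows essentially the same route as the paper: one direction by projecting the bounded set $M$ onto the factors ($N_1=\pi_1(M)$, $N_2=\pi_2(M)$), the other by gluing $\sigma_1,\sigma_2$ and using the product bounded set $M_1\times M_2$. In fact your write-up is slightly more careful than the paper's at two points it glosses over — you verify the BSE inequality for \emph{mixed} combinations $\sum_i c_i(\varphi_i,0)+\sum_j d_j(0,\psi_j)$ (after arranging $0\in M_1\cap M_2$, with constant $\beta_1+\beta_2$), and you justify continuity and boundedness of the glued function via the clopen decomposition of $\Delta(\mathcal A\oplus\mathcal B)$, which also lets you avoid the citation to the bidual description used in the paper.
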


\begin{proof} 
	Let $\sigma \in C_{\text{BSE}}(\Delta(\mathcal A\oplus \mathcal B))$. Then, by \cite[Proposition 3.5(iii)]{1}
	we have 
	$$\sigma\in C_{b}(\Delta(\mathcal A\oplus \mathcal B))\cap
	(\mathcal A^{**}\oplus \mathcal B^{**})\big{\vert}_{\Delta(\mathcal A\oplus \mathcal B)}.$$ 
	Therefore, there exist $\sigma_{1} \in \mathcal A^{**}$ and $\sigma_{2} \in \mathcal B^{**}$ where  
	$\sigma_{1}\vert_{\Delta(\mathcal A)}\in\mathcal A^{**}\vert_{\Delta(\mathcal A)}$, 
	$\sigma_{2}\vert_{\Delta(\mathcal B)}\in\mathcal B^{**}\vert_{\Delta(\mathcal B)}$ 
	and $\sigma=(\sigma_{1},\sigma_{2})$. In addition, there 
	exist a bounded set $M$ in $\mathcal A \oplus \mathcal B$ and a positive real number $\beta_{M}$
	such 
	that for every finite number of $c_1,\cdots,c_n \in \mathbb{C}$ and 
	$(\varphi_1,\psi_1),\cdots,(\varphi_n,\psi_n)\in 
	\Delta(\mathcal A\oplus \mathcal B)$ the inequality
	$$\big{\vert} \sum_{i=1}^{n}c_i \sigma(\varphi_i,\psi_i)\big{\vert}
	\leq \beta_{M} P_{M}\big{(}\sum_{i=1}^{n}c_i(\varphi_i, \psi_i)\big{)}$$
	holds. In particular, for each $(\varphi_1,0),\cdots,(\varphi_n,0)\in 
	\Delta(\mathcal A\oplus \mathcal B)$ and $c_1,\cdots,c_n \in \mathbb{C}$, there
	exist bounded sets $N_1\subseteq \mathcal A$ and $N_2\subseteq \mathcal B$
	such that
	\begin{align*}
	\big{\vert} \sum_{i=1}^{n}c_i \sigma_{1}(\varphi_i)\big{\vert}&=\big{\vert} \sum_{i=1}^{n}c_i \sigma(\varphi_i,0)\big{\vert} \\
	&\leq \beta_{M} P_{M}\big{(}\sum_{i=1}^{n}c_i(\varphi_i,0)\big{)} \\
	&={\beta}_{M}\sup\big{\lbrace}\big{\vert} \sum_{i=1}^{n}c_i(\varphi_i,0)(a,b)\big{\vert}: (a,b)\in M \big{\rbrace} \\
	&={\beta}_{M}\sup\big{\lbrace}\big{\vert} \sum_{i=1}^{n}c_i\varphi_i(a)\big{\vert}: a\in N_{1} \big{\rbrace} \\
	&=\beta_{M} P_{N_{1}}(\sum_{i=1}^{n}c_i\varphi_i), 
	\end{align*}
	and similarly
	$$
	\big{\vert}\sum_{i=1}^{n}c_i \sigma_{2}(\psi_i)\big{\vert}=\big{\vert} \sum_{i=1}^{n}c_i \sigma(0,\psi_{i})\big{\vert} 
	\leq\beta_{M} P_{N_{2}}(\sum_{i=1}^{n}c_i\psi_{i}).
	$$
	We recall from \cite{9} that
	$N_1=\pi_1(M)$ and $N_2= \pi_2 (M)$
	where $\pi_1$ and $\pi_1$ are projection mappings on $\mathcal A\oplus \mathcal B$.
	Moreover, $M\subseteq N_1\times N_2$.
	By above arguments, $\sigma_1 \in C_{\text{BSE}}(\Delta
	(\mathcal A))$ and $\sigma_2 \in C_{\text{BSE}}(\Delta(\mathcal B))$, 
	where $\sigma_1(\varphi)=\sigma(\varphi,0)$ and
	$\sigma_2 (\psi)=\sigma(0,\psi)$ for each $\varphi \in \Delta(\mathcal A)$
	and $\psi \in \Delta(\mathcal B)$. This implies that 
	$$C_{\text{BSE}}(\Delta(\mathcal A\oplus \mathcal B))\subseteq C_{\text{BSE}}
	(\Delta(\mathcal A))\times C_{\text{BSE}}(\Delta(\mathcal B)).$$
	For the reverse conclusion, let $\sigma_1 \in C_{\text{BSE}}(\Delta(\mathcal A))$
	and $\sigma_2 \in C_{\text{BSE}}(\Delta(\mathcal B))$. If $\xi\in \Delta(\mathcal A
	\oplus \mathcal B)$, then $\xi=(\varphi ,0)$ or $\xi=(0,\psi)$ 
	for some $\varphi \in \Delta(\mathcal A)$ or $\psi \in \Delta
	(\mathcal B)$. We define
	\begin{center}
		$\sigma(\xi):=\left\{
		\begin{array}{rl}
		\sigma_1(\varphi) & \text{if $\;\xi=(\varphi,0)$},\\
		\sigma_2(\psi) & \text{if $\;\xi=(0,\psi)$}.
		\end{array}\right.$
	\end{center}
	By assumption, there exist a bounded set $N_{1}$ of $\mathcal A$ and 
	a positive real number $\beta_{N_{1}}$ such that for every finite number
	of complex-numbers $c_1,\cdots, c_n$ and the same number of 
	$\varphi_1,\cdots, \varphi_n$ in $\Delta(\mathcal A)$ the inequality
	$$\big{\vert} \sum_{i=1}^{n}c_i \sigma_{1}(\varphi_i)\big{\vert} \leq\beta_{N_{1}} P_{N_{1}}(\sum_{i=1}^{n}c_i\varphi_i)$$ 
	holds. Also,
	there exist a bounded set $N_{2}$ of $\mathcal B$ and 
	a positive real number $\beta_{N_{2}}$ such that for every finite number
	of complex-numbers $c_1,\cdots, c_n$ and the same number of $\psi_1,\cdots, \psi_n$ in $\Delta(\mathcal B)$
	the inequality
	$$\big{\vert} \sum_{i=1}^{n}c_i \sigma_{2}(\psi_i)\big{\vert} \leq\beta_{N_{2}} P_{N_{2}}(\sum_{i=1}^{n}c_i\psi_i)$$
	holds. 
	Therefore, $$\big{\vert} \sum_{i=1}^{n}c_i \sigma(\xi)\big{\vert} \leq \beta_{M} P_{M}(\sum_{i=1}^{n}c_i(\xi)),$$ 
	where $M=N_1 \times N_2$ and $\beta_{M}=\max\lbrace\beta_{N_{1}},\beta_{N_{2}}\rbrace$. 
	Consequently, $$\sigma \in C_{\text{BSE}}(\Delta(\mathcal A \oplus \mathcal B)),$$ which completes the proof.
\end{proof}

\begin{proposition}\label{g2}
	Let $\mathcal A$ and $\mathcal B$ be two commutative Fr\'echet algebras. Then,
	$$\mathcal{M}(\mathcal A \oplus \mathcal B)=\mathcal{M}
	(\mathcal A)\times\mathcal{M}(\mathcal B).$$
\end{proposition}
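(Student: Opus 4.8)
The approach is to work throughout with the function–theoretic description of the multiplier algebra recalled in the introduction, $\mathcal M(\mathcal C)=\{\Phi\colon\Delta(\mathcal C)\to\mathbb C:\ \Phi\text{ continuous and }\Phi\cdot\widehat{\mathcal C}\subseteq\widehat{\mathcal C}\}$, and to reduce everything to the decomposition
$$\Delta(\mathcal A\oplus\mathcal B)=\bigl(\Delta(\mathcal A)\times\{0\}\bigr)\cup\bigl(\{0\}\times\Delta(\mathcal B)\bigr)$$
supplied by Lemma~2.1(iii). The first step is to note that this is a \emph{topological} disjoint union: writing $X=\Delta(\mathcal A)\times\{0\}$ and $Y=\{0\}\times\Delta(\mathcal B)$, if a net in $X$ converged in $\Delta(\mathcal A\oplus\mathcal B)$ to a point $(0,\psi)$ with $\psi\neq0$, then evaluating against an element $(0,b)$ with $\psi(b)\neq0$ would give $0\to\psi(b)\neq0$, a contradiction; hence $X$ is closed, and symmetrically $Y$ is closed, so both are clopen. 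Moreover $\varphi\mapsto(\varphi,0)$ is a homeomorphism of $\Delta(\mathcal A)$ onto $X$ and $\psi\mapsto(0,\psi)$ a homeomorphism of $\Delta(\mathcal B)$ onto $Y$ (continuity both ways is immediate from testing against elements of the form $(a,0)$ and $(0,b)$). Consequently $\Phi\mapsto(\Phi_1,\Phi_2)$, where $\Phi_1(\varphi)=\Phi(\varphi,0)$ and $\Phi_2(\psi)=\Phi(0,\psi)$, is an algebra isomorphism of $C(\Delta(\mathcal A\oplus\mathcal B))$ onto $C(\Delta(\mathcal A))\times C(\Delta(\mathcal B))$ for the pointwise operations.

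Next I would record the elementary identities $\widehat{(a,b)}(\varphi,0)=\varphi(a)$ and $\widehat{(a,b)}(0,\psi)=\psi(b)$ for $(a,b)\in\mathcal A\oplus\mathcal B$, so that under the above isomorphism the Gelfand transform $\widehat{(a,b)}$ corresponds to the pair $(\widehat a,\widehat b)$; in particular $\widehat{\mathcal A\oplus\mathcal B}$ corresponds precisely to $\widehat{\mathcal A}\times\widehat{\mathcal B}$.

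Now the two inclusions. For $\mathcal M(\mathcal A\oplus\mathcal B)\subseteq\mathcal M(\mathcal A)\times\mathcal M(\mathcal B)$: given $\Phi\in\mathcal M(\mathcal A\oplus\mathcal B)$, the functions $\Phi_1,\Phi_2$ are continuous by the clopen remark, and for a fixed $a\in\mathcal A$ the hypothesis $\Phi\cdot\widehat{(a,0)}\in\widehat{\mathcal A\oplus\mathcal B}$ produces $(a',b')$ with $\Phi\cdot\widehat{(a,0)}=\widehat{(a',b')}$; restricting to $X$ gives $\Phi_1\widehat a=\widehat{a'}\in\widehat{\mathcal A}$, whence $\Phi_1\in\mathcal M(\mathcal A)$, and symmetrically $\Phi_2\in\mathcal M(\mathcal B)$. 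For the reverse inclusion, given $\Phi_1\in\mathcal M(\mathcal A)$ and $\Phi_2\in\mathcal M(\mathcal B)$, glue them into $\Phi$ on $\Delta(\mathcal A\oplus\mathcal B)$; $\Phi$ is continuous since $X$ and $Y$ are open, and for any $(a,b)$ one picks $a',b'$ with $\Phi_1\widehat a=\widehat{a'}$ and $\Phi_2\widehat b=\widehat{b'}$, so that $\Phi\cdot\widehat{(a,b)}=\widehat{(a',b')}\in\widehat{\mathcal A\oplus\mathcal B}$, i.e. $\Phi\in\mathcal M(\mathcal A\oplus\mathcal B)$. Since the correspondence $\Phi\leftrightarrow(\Phi_1,\Phi_2)$ is additive and multiplicative, this yields the stated equality.

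The one place where something beyond bookkeeping is needed is the topological lemma of the first paragraph — that the two summands of $\Delta(\mathcal A\oplus\mathcal B)$ are clopen and carry their own Gelfand topologies — since this is exactly what makes ``restricting'' and ``gluing'' continuous functions legitimate; everything after that is a direct unwinding of the definition of $\mathcal M(\cdot)$. If in addition one wants the identification to be topological (matching the strong operator topology on $\mathrm{M}(\mathcal A\oplus\mathcal B)$ with the product topology), this follows from the description of bounded subsets of $\mathcal A\oplus\mathcal B$ via the projections $\pi_1,\pi_2$ already used in the proof of Lemma~\ref{g1}, and I would add a sentence to that effect.
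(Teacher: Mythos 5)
Your proposal is correct and follows essentially the same route as the paper: both use the function--theoretic definition of $\mathcal{M}(\cdot)$, the decomposition $\Delta(\mathcal A\oplus\mathcal B)=\bigl(\Delta(\mathcal A)\times\{0\}\bigr)\cup\bigl(\{0\}\times\Delta(\mathcal B)\bigr)$, and the componentwise verification of the multiplier condition via restriction and gluing. The only difference is that you explicitly verify that the two summands of the carrier space are clopen and homeomorphic to $\Delta(\mathcal A)$ and $\Delta(\mathcal B)$ (so that the restricted and glued functions are continuous), a point the paper's proof passes over in silence; this is a welcome extra bit of care, not a different argument.
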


\begin{proof} 
	Let $\Phi \in \mathcal{M}(\mathcal A)$ and $\Psi \in \mathcal{M}
	(\mathcal B)$. Since 
	$\Phi \cdot \widehat{\mathcal A}\subseteq \widehat{\mathcal A}$ and 
	$\Psi \cdot \widehat{\mathcal B} \subseteq \widehat{\mathcal B}$, for 
	all $(a,b)\in\mathcal A \oplus \mathcal B$ there are elements $c\in \mathcal A$ 
	and $d\in \mathcal B$ such that
	\begin{align*}
	\big{(}(\Phi,\Psi)\cdot\widehat{(a,b)}\big{)}(\varphi,0)&=(\Phi,\Psi)(\varphi,0)\widehat{(a,b)}(\varphi,0)\\
	&=\Phi(\varphi)\widehat{a}(\varphi)\\
	&=\widehat{c}(\varphi),
	\end{align*}
	where $(\Phi,\Psi)(\varphi,0)=\Phi(\varphi)$ for each 
	$(\varphi,0)\in \Delta(\mathcal A)
	\times\lbrace 0 \rbrace$. Similarly,
	\begin{align*}
	\big{(}(\Phi,\Psi)\cdot\widehat{(a,b)}\big{)}(0,\psi)&=(\Phi,\Psi)(0,\psi)\widehat{(a,b)}(0,\psi)\\
	&=\Psi(\psi)\widehat{b}(\psi)\\&
	=\widehat{d}(\psi),
	\end{align*}
	where $(\Phi,\Psi)(0,\psi)=\Psi(\psi)$ for each $(0,\psi)\in \lbrace 0 \rbrace \times \Delta(\mathcal B).$ Hence, $$\big{(}(\Phi,\Psi)\cdot\widehat{(a,b)}\big{)}(\varphi,0)=
	\widehat{(c,d)}(\varphi,0),$$ and $$\big{(}(\Phi,\Psi)\cdot\widehat{(a,b)}\big{)}(0,\psi)=
	\widehat{(c,d)}(0,\psi).$$ Thus, $(\Phi,\Psi)\cdot\widehat{(\mathcal A\oplus \mathcal B)}\subseteq 
	\widehat{(\mathcal A\oplus \mathcal B)}$ and $(\Phi,\Psi)\in \mathcal{M}(\mathcal A\oplus \mathcal B)$. 
	
	Now, suppose that 
	$F \in \mathcal{M}(\mathcal A\oplus \mathcal B)$. Define 
	$\Phi(\varphi)=F(\varphi,0)$ and $\Psi(\psi)= F(0,\psi)$ for all 
	$\varphi \in\Delta(\mathcal A)$ and $ \psi \in \Delta(\mathcal B).$ Therefore, 
	$F=(\Phi,\Psi)$. It is enough to show that $\Phi \in \mathcal{M}(\mathcal A)$ and $\Psi \in 
	\mathcal{M}(\mathcal B).$ For each $a\in\mathcal A$, there exists
	$(a^{\prime},b^{\prime})\in\mathcal A\oplus\mathcal B$ such that 
	$$\Phi(\varphi)\widehat{a}(\varphi)=F(\varphi, 0)\widehat{(a,0)}(\varphi,0)=\widehat{(a^{\prime}, b^{\prime})}(\varphi,0)
	=\widehat{a^{\prime}}(\varphi).$$
	Consequently, $\Phi \cdot \widehat{\mathcal A}\subseteq \widehat{\mathcal A}$ and $\Phi \in \mathcal{M}(\mathcal A).$
	Similarly, $\Psi \in\mathcal{M}(\mathcal B).$ Hence, 
	$$\mathcal{M}(\mathcal A \oplus \mathcal B)=\big{\lbrace}(\Phi,\Psi):
	\Phi \in \mathcal{M}(\mathcal A),\, \Psi \in \mathcal{M}
	(\mathcal B)\big{\rbrace},$$
	and completes the proof.
\end{proof}

\begin{corollary}
	Let $\mathcal A$ and $\mathcal B$ be two commutative semisimple 
	Fr\'echet algebras. Then,
	$$\mathrm{M}(\mathcal A\oplus \mathcal B)=\mathrm{M}(\mathcal A)\oplus \mathrm{M}(\mathcal B).$$
\end{corollary}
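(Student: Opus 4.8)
The plan is to deduce the corollary directly from Proposition~\ref{g2}, using semisimplicity to pass freely between $\mathrm{M}(\cdot)$ and $\mathcal{M}(\cdot)$. First I would observe that, by part (iv) of the first lemma of this section, $\mathcal A\oplus\mathcal B$ is again a commutative semisimple Fr\'echet algebra; hence the identities $\mathcal{M}(\mathcal A)=\widehat{\mathrm{M}(\mathcal A)}=\mathrm{M}(\mathcal A)$, together with the analogous statements for $\mathcal B$ and for $\mathcal A\oplus\mathcal B$ recorded in the Introduction, are all at our disposal. Combining these with Proposition~\ref{g2} already yields the algebraic equality $\mathrm{M}(\mathcal A\oplus\mathcal B)=\mathcal{M}(\mathcal A\oplus\mathcal B)=\mathcal{M}(\mathcal A)\times\mathcal{M}(\mathcal B)=\mathrm{M}(\mathcal A)\times\mathrm{M}(\mathcal B)$; what remains is to exhibit this bijection through a concrete coordinatewise correspondence of multipliers and to check that it is a topological algebra isomorphism for the strong operator topologies.

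Concretely, I would define $\Theta\colon\mathrm{M}(\mathcal A)\times\mathrm{M}(\mathcal B)\to\mathrm{M}(\mathcal A\oplus\mathcal B)$ by $\Theta(S,T)(a,b)=(Sa,Tb)$. A routine check, using the coordinatewise product from item (i) of the direct-sum definition, shows that $\Theta(S,T)$ is linear and satisfies the multiplier identity $(a,b)\cdot\Theta(S,T)(c,d)=\Theta(S,T)\big((a,b)\cdot(c,d)\big)$, so $\Theta$ is well defined, and it is plainly an injective algebra homomorphism. For surjectivity I would take $U\in\mathrm{M}(\mathcal A\oplus\mathcal B)$, form $\widehat U\in\mathcal{M}(\mathcal A\oplus\mathcal B)$, apply Proposition~\ref{g2} to write $\widehat U=(\Phi,\Psi)$ with $\Phi\in\mathcal{M}(\mathcal A)=\widehat{\mathrm{M}(\mathcal A)}$ and $\Psi\in\mathcal{M}(\mathcal B)=\widehat{\mathrm{M}(\mathcal B)}$, choose $S\in\mathrm{M}(\mathcal A)$ and $T\in\mathrm{M}(\mathcal B)$ with $\widehat S=\Phi$ and $\widehat T=\Psi$, and then invoke semisimplicity of $\mathcal A\oplus\mathcal B$ (so that $U\mapsto\widehat U$ is injective) to conclude $U=\Theta(S,T)$.

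Finally I would verify that $\Theta$ is a homeomorphism for the strong operator topologies. A net $(S_\lambda,T_\lambda)$ converges to $(S,T)$ in $\mathrm{M}(\mathcal A)\times\mathrm{M}(\mathcal B)$ exactly when $S_\lambda a\to Sa$ in $\mathcal A$ and $T_\lambda b\to Tb$ in $\mathcal B$ for all $a\in\mathcal A$ and $b\in\mathcal B$, which by item (ii) of the direct-sum definition is precisely the assertion that $\Theta(S_\lambda,T_\lambda)(a,b)\to\Theta(S,T)(a,b)$ in $\mathcal A\oplus\mathcal B$ for every $(a,b)$, i.e. $\Theta(S_\lambda,T_\lambda)\to\Theta(S,T)$ in the strong operator topology; hence $\Theta$ and $\Theta^{-1}$ are both continuous. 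I do not anticipate a genuine obstacle; the one point requiring care is the surjectivity step, where one must be certain that the operator $S\oplus T$ recovered coordinatewise is literally $U$ and not merely an operator sharing the same Gelfand transform — and this is exactly where semisimplicity of $\mathcal A\oplus\mathcal B$, via injectivity of the Gelfand representation of its multiplier algebra, is used.
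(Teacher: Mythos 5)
Your proposal is correct and follows essentially the same route the paper intends: the corollary is an immediate consequence of Proposition~\ref{g2} together with semisimplicity of $\mathcal A\oplus\mathcal B$ (Lemma 2.1(iv)) and the identification $\mathcal{M}(\cdot)=\widehat{\mathrm{M}(\cdot)}=\mathrm{M}(\cdot)$ for commutative semisimple Fr\'echet algebras recorded in the Introduction. Your explicit coordinatewise map $\Theta$, the surjectivity argument via injectivity of the Gelfand representation of the multiplier algebra, and the strong-operator-topology check simply supply the details the paper leaves unstated.
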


We now state the main result of this paper.

\begin{theorem}
	Let $(\mathcal A,r_\ell)$ and $(\mathcal B,s_\ell)$ be two commutative semisimple 
	Fr\'echet algebras. Then, $\mathcal A\oplus \mathcal B$ is a {\text BSE}-algebra 
	if and only if $\mathcal A$ and $\mathcal B$ are {\text BSE}-algebras.
\end{theorem}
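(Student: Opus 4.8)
The plan is to deduce the theorem directly from the two structural results already proved, Lemma~\ref{g1} and Proposition~\ref{g2}, together with the identity $\mathcal{M}(\mathcal C)=\widehat{\mathrm{M}(\mathcal C)}=\mathrm{M}(\mathcal C)$ valid for any commutative semisimple Fr\'echet algebra $\mathcal C$. Since $\mathcal A$ and $\mathcal B$ are semisimple, so is $\mathcal A\oplus\mathcal B$ by Lemma~2.1(iv), and hence $\widehat{\mathrm{M}(\mathcal A\oplus\mathcal B)}=\mathcal{M}(\mathcal A\oplus\mathcal B)$; thus $\mathcal A\oplus\mathcal B$ is a {\text BSE}-algebra if and only if
$$\mathcal{M}(\mathcal A\oplus\mathcal B)=C_{\text{BSE}}(\Delta(\mathcal A\oplus\mathcal B)).$$
By Proposition~\ref{g2} the left-hand side equals $\mathcal{M}(\mathcal A)\times\mathcal{M}(\mathcal B)$, and by Lemma~\ref{g1} the right-hand side equals $C_{\text{BSE}}(\Delta(\mathcal A))\times C_{\text{BSE}}(\Delta(\mathcal B))$, where in both cases the bijection with the product is the restriction map sending a function $\sigma$ on $\Delta(\mathcal A\oplus\mathcal B)=(\Delta(\mathcal A)\times\{0\})\cup(\{0\}\times\Delta(\mathcal B))$ to the pair $(\sigma(\cdot,0),\sigma(0,\cdot))$. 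So the theorem reduces to the assertion that the equality $\mathcal{M}(\mathcal A)\times\mathcal{M}(\mathcal B)=C_{\text{BSE}}(\Delta(\mathcal A))\times C_{\text{BSE}}(\Delta(\mathcal B))$ holds precisely when $\mathcal{M}(\mathcal A)=C_{\text{BSE}}(\Delta(\mathcal A))$ and $\mathcal{M}(\mathcal B)=C_{\text{BSE}}(\Delta(\mathcal B))$, i.e. precisely when $\mathcal A$ and $\mathcal B$ are both {\text BSE}-algebras.

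The ``if'' direction of this reformulation is immediate, since equal factors give equal products. For the ``only if'' direction I would argue componentwise, using that the zero function belongs both to $\mathcal{M}(\mathcal B)$ (it is continuous and $0\cdot\widehat{\mathcal B}=\{0\}\subseteq\widehat{\mathcal B}$) and to $C_{\text{BSE}}(\Delta(\mathcal B))$. Given $\sigma_1\in C_{\text{BSE}}(\Delta(\mathcal A))$, the pair $(\sigma_1,0)$ then lies in $C_{\text{BSE}}(\Delta(\mathcal A))\times C_{\text{BSE}}(\Delta(\mathcal B))=\mathcal{M}(\mathcal A)\times\mathcal{M}(\mathcal B)$, forcing $\sigma_1\in\mathcal{M}(\mathcal A)$; running the same argument with $(\Phi,0)$ for $\Phi\in\mathcal{M}(\mathcal A)$ gives the reverse inclusion, so $\mathcal{M}(\mathcal A)=C_{\text{BSE}}(\Delta(\mathcal A))$ and $\mathcal A$ is {\text BSE}. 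By symmetry the same conclusion holds for $\mathcal B$, which completes the proof.

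Since everything is a combination of facts already available, I do not anticipate a genuine obstacle; the one point that should be checked carefully rather than asserted is that the product decompositions of Lemma~\ref{g1} and Proposition~\ref{g2} are realised by one and the same restriction map on $\Delta(\mathcal A\oplus\mathcal B)$, so that the equality $\mathcal{M}(\mathcal A\oplus\mathcal B)=C_{\text{BSE}}(\Delta(\mathcal A\oplus\mathcal B))$ really does transcribe into an honest equality of subsets of $C_b(\Delta(\mathcal A\oplus\mathcal B))$ and the extraction of a single component via the zero multiplier is legitimate on both sides.
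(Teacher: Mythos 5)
Your proposal is correct, and it rests on the same two pillars as the paper's proof (Lemma~\ref{g1} and Proposition~\ref{g2}, both realised by the same restriction map $\sigma\mapsto(\sigma(\cdot,0),\sigma(0,\cdot))$ on $\Delta(\mathcal A\oplus\mathcal B)$, as you rightly insist must be checked -- and it is, since both proofs define the components exactly this way). The difference is in the forward direction: the paper does not simply multiply the two equalities together. Instead it first invokes \cite[Theorem 4.5]{1} to get bounded $\Delta$-weak approximate identities for $\mathcal A$ and $\mathcal B$, explicitly assembles the product net $(e_\alpha,f_\beta)$ into a bounded $\Delta$-weak approximate identity for $\mathcal A\oplus\mathcal B$ to obtain the inclusion $\mathcal{M}(\mathcal A\oplus\mathcal B)\subseteq C_{\text{BSE}}(\Delta(\mathcal A\oplus\mathcal B))$, and only then uses the componentwise decomposition for the reverse inclusion. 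Your argument shows this approximate-identity step is logically superfluous once Lemma~\ref{g1} and Proposition~\ref{g2} are in hand (together with semisimplicity of $\mathcal A\oplus\mathcal B$, via Lemma~2.1(iv), to identify $\widehat{\mathrm{M}(\mathcal A\oplus\mathcal B)}$ with $\mathcal{M}(\mathcal A\oplus\mathcal B)$), so you gain economy; what the paper's route buys is the explicit construction of the approximate identity for the direct sum, a fact of some independent interest, and an argument for one inclusion that does not depend on the product decomposition of the multiplier algebra. Your converse direction, extracting each component by pairing with the zero function (which indeed lies in both $\mathcal{M}$ and $C_{\text{BSE}}$ of the other factor), is essentially identical to the paper's, which pairs $\sigma_1$ and $\sigma_2$ with $0$ in the same way.
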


\begin{proof} 
	Let $\mathcal A$ and $\mathcal B$ be {\text BSE}-algebras. By applying \cite[Theorem 4.5]{1},
	$\mathcal A$ and $\mathcal B$ have bounded $\Delta$-weak approximate identities. Suppose that  
	$(e_\alpha)_{\alpha}$ and $(f_\beta)_{\beta}$
	are bounded $\Delta$-weak approximate identities
	of $\mathcal A$ and $\mathcal B$, respectively. Therefore, $\lbrace
	(e_\alpha, f_\beta)\rbrace_{(\alpha,\beta)}$
	is a bounded $\Delta$-weak approximate identity for $\mathcal A
	\oplus \mathcal B$. Indeed, 
	for all $\xi \in \Delta(\mathcal A\oplus \mathcal B)$ there exists
	$\varphi \in \Delta(\mathcal A)$ or $\psi \in \Delta(\mathcal B),$ where 
	$\xi=(\varphi,0)$ or $\xi=(0,\psi).$ Thus,
	$$\underset{(\alpha,\beta)}{\lim}\xi(e_\alpha,f_\beta )
	=\underset{(\alpha,\beta)}{\lim}(\varphi,0)(e_\alpha,f_\beta)
	=\underset{\alpha}{\lim}\varphi(e_\alpha)=1,$$
	or
	$$\underset{(\alpha,\beta)}{\lim}\xi(e_\alpha, f_\beta)
	=\underset{(\alpha,\beta)}{\lim}(0,\psi)(e_\alpha,f_\beta)=
	\underset{\beta}{\lim}\psi(f_\beta)=1.$$
	Moreover, for each $\ell \in \mathbb{N}$ we have
	$\underset{\alpha}{\sup}\, r_{\ell}(e_\alpha)< \infty$ and $\underset{\beta}{\sup}\, s_{\ell}(f_\beta)< \infty$.
	Also, $(\mathcal A\oplus \mathcal B, P_\ell)$ is a Fr\'echet algebra,
	where $P_{\ell}(a,b)=r_{\ell}(a)+s_{\ell}(b)$ for each $\ell\in\mathbb{N}$. Therefore,
	$$\underset{(\alpha,\beta)}{\sup}\, P_{\ell}(e_\alpha, f_\beta)=
	\underset{\alpha}{\sup}\, r_{\ell}(e_\alpha) +\underset{\beta}
	{\sup}\, s_{\ell}(f_\beta)< \infty.$$
	Hence, for all $\xi \in \Delta(\mathcal A\oplus \mathcal B)$ we have 
	$\underset{(\alpha,\beta)}{\lim} \xi (e_\alpha,f_\beta )=1$ 
	and $\lbrace(e_\alpha, f_\beta) \rbrace_
	{(\alpha,\beta)}$ is a $\Delta$-weak approximate identity for 
	$\mathcal A\oplus \mathcal B$. Consequently,  
	$$\mathcal{M}(\mathcal A \oplus \mathcal B) \subseteq C_{\text{BSE}}(\Delta
	(\mathcal A \oplus \mathcal B)).$$ 
	
	For the reverse
	conclusion, let $\sigma \in C_{\text{BSE}}(\Delta(\mathcal A
	\oplus \mathcal B))$. Hence, there exist
	$\sigma_1 \in C_{\text{BSE}}(\Delta(\mathcal A))$ and $\sigma_2 
	\in C_{\text{BSE}}(\Delta(\mathcal B))$ such that 
	$\sigma(\varphi,\psi)=\sigma_{1}(\varphi)+\sigma_{2}(\psi)$ for 
	all $\varphi\in\Delta(\mathcal A)$ and $\psi\in\Delta(\mathcal B).$ 
	Since $\mathcal A$ and $\mathcal B$ are {\text BSE} algebras, 
	$\sigma_1 \in\mathcal{M}(\mathcal A)$ and $\sigma_2 \in\mathcal{M}(\mathcal B)$,
	by applying Proposition \ref{g2}. 
	Therefore, $\sigma \in\mathcal{M}(\mathcal A)\times \mathcal{M}(\mathcal B).$ Thus,  
	$\sigma \in\mathcal{M}(\mathcal A\oplus \mathcal B)$ and 
	$$C_{\text{BSE}}(\Delta(\mathcal A\oplus \mathcal B))\subseteq\mathcal
	{M}(\mathcal A\oplus \mathcal B).$$
	Consequently, $\mathcal A\oplus \mathcal B$ is a {\text BSE}-algebra.
	
	Conversely, suppose that $\mathcal A\oplus \mathcal B$ is a 
	{\text BSE}-algebra. Then, 
	$$\mathcal{M}(\mathcal A\oplus \mathcal B)=\mathrm{M}(\mathcal A\oplus
	\mathcal B)=C_{\text{BSE}}(\Delta(\mathcal A\oplus \mathcal B)).$$
	Let $\sigma_1 \in C_{\text{BSE}}(\Delta(\mathcal A))$ and $\sigma_2 \in
	C_{\text{BSE}}(\Delta(\mathcal B))$. Then $(\sigma_1,0)$ and $(0, \sigma_2)$ belong to
	$C_{\text{BSE}}(\Delta(\mathcal A\oplus \mathcal B)).$ Also, 
	$\sigma_1 \in \mathcal{M}(\mathcal A)$ and $\sigma_2 \in 
	\mathcal{M}(\mathcal B)$ and so  
	$$C_{\text{BSE}}(\Delta(\mathcal A))\subseteq\mathcal{M}(\mathcal A)\;\;\;\;\text{and}\;\;\;\;
	C_{\text{BSE}}(\Delta(\mathcal B))\subseteq\mathcal{M}(\mathcal B).$$
	Conversely, suppose that $\sigma_1 \in \mathcal{M}(\mathcal A)$ and 
	$\sigma_2 \in \mathcal{M}(\mathcal B)$. Thus,
	$(0, \sigma_2)$ and $(\sigma_1 , 0)$ belong to $\mathcal{M}(\mathcal A)\times
	\mathcal{M}(\mathcal B)$, where
	\begin{align*} 
	\mathcal{M}(\mathcal A)\times
	\mathcal{M}(\mathcal B)&=\mathcal{M}(\mathcal A\oplus \mathcal B)\\
	&=C_{\text{BSE}}(\Delta(\mathcal A\oplus \mathcal B))\\
	&=C_{\text{BSE}}(\Delta(\mathcal A))\times C_{\text{BSE}}(\Delta(\mathcal B)).
	\end{align*}
	Thus, $\sigma_1 \in C_{\text{BSE}}(\Delta(\mathcal A))$ and $\sigma_2 
	\in C_{\text{BSE}}(\Delta(\mathcal B))$. Hence, $\mathcal{M}(\mathcal A)\subseteq
	C_{\text{BSE}}(\Delta(\mathcal A))$ and $\mathcal{M}(\mathcal B)\subseteq
	C_{\text{BSE}}(\Delta(\mathcal B))$. Consequently, $\mathcal A$ and $\mathcal B$ are {\text BSE} algebras.
\end{proof}

\section{ Ideals and Quotient algebras of {\text BSE}-Fr\'echet algerbras}

Let $(\mathcal A,p_\ell)$ be a commutative Fr\'echet algebra and $I$ be
a closed ideal of $\mathcal A$. Then, $(I, p_\ell \vert_{I})$ and
$(\frac{\mathcal A}{I},s_\ell)$ are also  Fr\'echet algebras where
$$s_\ell(x+I):=\inf \lbrace p_\ell(x + y): y \in I  \rbrace.$$
In the sequel we will call $I$ an essential ideal, when $I$ equals the closed
linear span of $\lbrace ax: a \in  \mathcal A,\; x \in I \rbrace$.
Moreover, we will give conditions under which $I$ and 
$\frac{\mathcal A}{I}$ are {\text BSE} algerbras.

Following \cite[Corollary 26.25]{9}, if $\mathcal A$ is a Fr\'echet Schwartz space and $I$ be
a closed ideal of $\mathcal A$, then 
$$I^{\ast}\cong \frac{\mathcal A^{\ast}}{I^{\circ}}\quad and \quad(\frac{\mathcal A}{I} )^{\ast}\cong I^{\circ},$$
where $I^{\circ}=\lbrace f \in \mathcal A^{\ast}:\vert f(x) \vert \leq 1, \;\text{for all $x \in I$} \rbrace$
and equivalently $$I^{\circ}=\lbrace f \in \mathcal A^{\ast}:\, f(x)=0, \;\text{for all $x \in I$} \rbrace.$$

Analogous to the Banach case, if $\mathcal A$ is a {\text BSE}-algerbra with
discrete carrier space, then $C_{b}(\Delta(\mathcal A))=\ell_{\infty}(\Delta(\mathcal A))$.
Hence,
$C_{\text{BSE}}(\Delta(\mathcal A))={\mathcal A}^{\ast \ast} \vert_{\Delta(\mathcal A)}.$
Moreover, by similar arguments to the proof of \cite[Theorem 3.1.18]{10}, if $I$ is a closed ideal 
of $\mathcal A$, then $\Delta(I)$ is discrete.

\begin{lemma} \label{i}
	If the Fr\'echet algebra $\mathcal A$  has a bounded $\Delta$-weak approximate identity and
	I is a closed essential ideal of $\mathcal A$, then $\frac{\mathcal A}{I}$ has a bounded $\Delta$-weak approximate identity.
\end{lemma}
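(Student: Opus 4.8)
The plan is to start from a bounded $\Delta$-weak approximate identity $(e_\alpha)_{\alpha\in\Lambda}$ for $\mathcal A$ and push it down to the quotient via the canonical surjection $q\colon\mathcal A\to\mathcal A/I$, $q(a)=a+I$. The natural guess is that $(e_\alpha+I)_\alpha$ works, but the subtlety is that $\Delta(\mathcal A/I)$ is (homeomorphic to) the hull of $I$, namely $h(I)=\{\varphi\in\Delta(\mathcal A):\varphi|_I=0\}$, so a $\Delta$-weak approximate identity for $\mathcal A/I$ only needs to ``see'' those $\varphi$ that annihilate $I$. For such $\varphi$ we have $\widetilde\varphi(q(a))=\varphi(a)$ where $\widetilde\varphi$ is the induced character on $\mathcal A/I$, hence $\widetilde\varphi(e_\alpha+I)=\varphi(e_\alpha)\to 1$ immediately. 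So the limit condition is essentially free; the real work is boundedness of $(e_\alpha+I)_\alpha$ in $\mathcal A/I$, and this is where the essentiality of $I$ and the structure of the quotient seminorms $s_\ell(x+I)=\inf\{p_\ell(x+y):y\in I\}$ come in.

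First I would record that $s_\ell(e_\alpha+I)\le p_\ell(e_\alpha)$ trivially, so in fact boundedness of $(e_\alpha+I)_\alpha$ is immediate from boundedness of $(e_\alpha)_\alpha$ — $\sup_\alpha s_\ell(e_\alpha+I)\le\sup_\alpha p_\ell(e_\alpha)<\infty$ for every $\ell$. Next I would verify that $(e_\alpha+I)_\alpha$ is genuinely a net in $\mathcal A/I$ indexed by the same $\Lambda$, and that for each $\xi\in\Delta(\mathcal A/I)$ we can write $\xi=\widetilde\varphi$ for a unique $\varphi\in h(I)\subseteq\Delta(\mathcal A)$; this uses the standard identification of $\Delta(\mathcal A/I)$ with the hull of $I$, which for Fréchet algebras goes through exactly as in the Banach case (one checks $\xi\circ q$ is a nonzero multiplicative functional on $\mathcal A$ vanishing on $I$, and conversely). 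Then $\xi(e_\alpha+I)=\varphi(e_\alpha)\to 1$, so all three requirements — net in $\mathcal A/I$, boundedness in each seminorm $s_\ell$, and $\xi$-convergence to $1$ for every $\xi\in\Delta(\mathcal A/I)$ — are met, and the conclusion follows.

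Given how smoothly the above runs, I should be honest that the hypothesis ``$I$ essential'' looks like it is not actually needed for this particular statement; essentiality is presumably what is used for the companion result about $I$ itself having a bounded $\Delta$-weak approximate identity (there one needs $e_\alpha x\to x$ inside $I$, which forces one to stay in the ideal), and it may be carried along here just for uniformity of hypotheses. If the authors do want to use it, the only place it could enter is to ensure $h(I)\ne\Delta(\mathcal A)$ is handled correctly or that $\mathcal A/I$ is nondegenerate; I would note this and either invoke it there or remark that it is superfluous for the quotient half. The one genuine technical point to be careful about — the ``main obstacle'' such as it is — is the identification $\Delta(\mathcal A/I)\cong h(I)$ together with the compatibility $\widetilde\varphi\circ q=\varphi$; once that is in place the argument is a two-line verification. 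I would therefore present the proof as: (1) cite/establish the character identification, (2) define the candidate net $(e_\alpha+I)_\alpha$, (3) check boundedness via $s_\ell\le p_\ell$, (4) check the limit via $\widetilde\varphi(e_\alpha+I)=\varphi(e_\alpha)\to1$, and conclude.
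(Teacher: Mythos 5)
Your proof is correct and follows essentially the same route as the paper: define $f_\alpha=e_\alpha+I$, identify each $\psi\in\Delta(\mathcal A/I)$ with a character $\varphi\in\Delta(\mathcal A)$ vanishing on $I$, and conclude $\psi(f_\alpha)=\varphi(e_\alpha)\to 1$. Your explicit check of boundedness via $s_\ell(e_\alpha+I)\le p_\ell(e_\alpha)$ and your observation that essentiality of $I$ is not actually used are both accurate (the paper's proof likewise never invokes essentiality and leaves boundedness implicit).
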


\begin{proof} Let $(e_\alpha)_\alpha$ be a bounded $\Delta$-weak 
	approximate identity of $\mathcal A$. Then, for each $\varphi \in \Delta(\mathcal A)$, $\varphi(e_\alpha)\longrightarrow_\alpha 1$. For each $\alpha$, we define $f_\alpha:=e_\alpha+I$ and show that the net $(f_\alpha)_\alpha$ is a bounded $\Delta$-weak approximate identity for $\frac{\mathcal A}{I}$. Suppose that $\psi \in \Delta(\frac{\mathcal A}{I})$. Therefore, there exists $\varphi \in \Delta(\mathcal A)$ such that $\varphi \vert_{I}=0$ and $\psi(a+I):=\varphi(a)$, for each $a \in \mathcal A$. Hence,
	$$\psi(f_\alpha)=\varphi(e_\alpha)\longrightarrow_\alpha 1.$$
	Thus, $(f_\alpha)_\alpha$ is a bounded $\Delta$-weak approximate identity for $\frac{\mathcal A}{I}$.
\end{proof}

Let $(\mathcal A,p_\ell)$ be a Fr\'echet algebra. For each $\ell\in\mathbb{N}$, consider
$$M_{\ell}=\{a\in\mathcal{A}:p_{\ell}(a)< 1\},$$
and
$$
M_{\ell}^{\circ}=\{f\in\mathcal{A}^*:|f(a)|\leq 1\;\text{for all $a\in M_{\ell}$}\}.
$$
As mentioned in section \ref{S0}, by applying \cite[Theorem 3.3]{1}, $\big{(}C_{\text{BSE}}(\Delta(\mathcal A)),r_{\ell}\big{)}$
is a commutative semisimple Fr\'echet algebra such that for each $\ell\in\mathbb{N}$ and $\sigma\in\Delta(\mathcal A)$ we have
$$
r_{\ell}(\sigma)=\sup\{|\sigma(f)|: f\in M_{\ell}^{\circ}\cap <\Delta(\mathcal A)>\}.
$$
Now, the following result is immediate.

\begin{theorem}
	Let $\mathcal A$ be a {\text BSE}-Fr\'echet Schwartz algebra with discrete carrier
	space and $I$ be an essential closed ideal of $\mathcal A$. Then,
	\begin{enumerate}
		\item[(i)]$ C_{\text{BSE}}(\Delta(I))\subseteq \widehat{\mathrm{M}(I)}.$
		\item[(ii)]$ \widehat{\mathrm{M}(\frac{\mathcal A}{I})}=C_{\text{BSE}}(\Delta(\frac{\mathcal A}{I})).$
	\end{enumerate}
\end{theorem}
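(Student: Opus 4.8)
The plan is to treat the two parts separately, in each case reducing the claim to the characterization of BSE-algebras via bounded $\Delta$-weak approximate identities together with the duality isomorphisms recalled from \cite[Corollary 26.25]{9}.

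For part (ii), first I would observe that since $\mathcal A$ is a BSE-algebra it has a bounded $\Delta$-weak approximate identity by \cite[Theorem 4.5]{1}; because $I$ is a closed essential ideal, Lemma \ref{i} gives a bounded $\Delta$-weak approximate identity for $\frac{\mathcal A}{I}$, and hence $\widehat{\mathrm{M}(\frac{\mathcal A}{I})}\subseteq C_{\text{BSE}}(\Delta(\frac{\mathcal A}{I}))$. For the reverse inclusion, I would identify $\Delta(\frac{\mathcal A}{I})$ with the hull $h(I)=\{\varphi\in\Delta(\mathcal A):\varphi|_I=0\}$ and use $(\frac{\mathcal A}{I})^{\ast}\cong I^{\circ}$; since $\mathcal A$ has discrete carrier space the same is true of $\Delta(\frac{\mathcal A}{I})$, so $C_{\text{BSE}}(\Delta(\frac{\mathcal A}{I}))=(\frac{\mathcal A}{I})^{\ast\ast}|_{\Delta(\frac{\mathcal A}{I})}$. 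Given $\sigma\in C_{\text{BSE}}(\Delta(\frac{\mathcal A}{I}))$, lift it to a function on $\Delta(\mathcal A)$ that is $0$ off $h(I)$; using that $\mathcal A$ is BSE one produces a multiplier of $\mathcal A$ whose Gelfand transform extends $\sigma$, and then one checks this multiplier descends to a multiplier of $\frac{\mathcal A}{I}$ (this is where essentiality of $I$ is used, to guarantee the induced map is well defined on the quotient). Thus $C_{\text{BSE}}(\Delta(\frac{\mathcal A}{I}))\subseteq\widehat{\mathrm{M}(\frac{\mathcal A}{I})}$, giving equality.

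For part (i), I would use the dual description $I^{\ast}\cong\frac{\mathcal A^{\ast}}{I^{\circ}}$ from \cite[Corollary 26.25]{9}, valid because $\mathcal A$ is a Fr\'echet Schwartz space, and the fact (noted before the statement, by analogy with \cite[Theorem 3.1.18]{10}) that $\Delta(I)$ is discrete, so $C_{\text{BSE}}(\Delta(I))=I^{\ast\ast}|_{\Delta(I)}$. Identifying $\Delta(I)$ with $\{\varphi\in\Delta(\mathcal A):\varphi|_I\neq 0\}=\Delta(\mathcal A)\setminus h(I)$, a given $\sigma\in C_{\text{BSE}}(\Delta(I))$ can be regarded as an element of $I^{\ast\ast}$; extending it by $0$ on $h(I)$ and using the BSE-inequality for $\mathcal A$ together with the quotient seminorm description $r_\ell$ recalled above, one shows the extension lies in $C_{\text{BSE}}(\Delta(\mathcal A))=\widehat{\mathrm{M}(\mathcal A)}$, and then restricts the corresponding multiplier of $\mathcal A$ to $I$ (which is legitimate since $I$ is an ideal) to obtain an element of $\mathrm{M}(I)$ whose transform is $\sigma$. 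Hence $C_{\text{BSE}}(\Delta(I))\subseteq\widehat{\mathrm{M}(I)}$.

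The main obstacle I anticipate is the bookkeeping around the decomposition $\Delta(\mathcal A)=\Delta(I)\,\dot\cup\,h(I)$ and the compatibility of the various dual identifications: one must verify that extending a BSE-function on $\Delta(I)$ (resp. on $h(I)$) by zero actually produces a BSE-function on $\Delta(\mathcal A)$, with a uniform constant and a single bounded set $M_\ell$, and that the resulting multiplier of $\mathcal A$ genuinely restricts to, or factors through, $I$ (resp. $\frac{\mathcal A}{I}$). The discreteness of the carrier space is what makes the "extend by zero" step produce a continuous — indeed bounded continuous — function, and the Schwartz hypothesis is exactly what licenses the reflexive-type duality $I^{\ast}\cong\mathcal A^{\ast}/I^{\circ}$ and $(\mathcal A/I)^{\ast}\cong I^{\circ}$; without these the argument breaks. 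Essentiality of $I$ enters only in part (ii), to invoke Lemma \ref{i} and to make the descent of a multiplier to the quotient well defined.
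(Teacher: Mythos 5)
Your part (i) follows the paper's route: extend $w$ by zero on $\Delta(\mathcal A)\cap I^{\circ}$, check the BSE-inequality for the extension, use the BSE property of $\mathcal A$ to get $T\in\mathrm{M}(\mathcal A)$ with $\widehat T=\sigma$, and restrict to $I$. The one detail worth making explicit is \emph{why} the zero-extension is BSE: the bounded set $M$ witnessing the inequality for $w$ lies in $I$, so every $\varphi_i\in I^{\circ}$ vanishes on $M$ and the ``missing'' terms cost nothing, i.e.\ $P_M\bigl(\sum_{\varphi_i\notin I^{\circ}}c_i\varphi_i|_I\bigr)=P_M\bigl(\sum_i c_i\varphi_i|_I\bigr)\le P_M\bigl(\sum_i c_i\varphi_i\bigr)$. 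You do not actually need the discreteness of $\Delta(I)$ or the identification $C_{\text{BSE}}(\Delta(I))=I^{**}|_{\Delta(I)}$ here; also note that essentiality of $I$ is used in (i) as well, to get $T(I)\subseteq I$ (via $T(xa)=xT(a)\in I$ and density of such products), so that $S=T|_I$ is a map of $I$ into itself.

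In part (ii) there is a genuine gap, and you half-identified it yourself as ``the main obstacle.'' Your plan is to lift $\sigma'\in C_{\text{BSE}}(\Delta(\mathcal A/I))$ to the function on $\Delta(\mathcal A)$ that is zero off $h(I)$ and then argue it is BSE for $\mathcal A$. Unlike in (i), this cannot be verified by the direct estimate: the test set now lives in $\mathcal A$ (not in $I$), so the characters $\varphi_i\in\Delta(\mathcal A)\setminus I^{\circ}$ are \emph{not} annihilated, and there is no inequality of the form $P_N\bigl(\sum_{\varphi_i\in h(I)}c_i\varphi_i\bigr)\le P_N\bigl(\sum_i c_i\varphi_i\bigr)$ --- cancellation between the two groups of characters can make the right-hand side small while the left-hand side stays large. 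So the ``extend by zero'' step is unjustified, and it is also not what is needed. The paper's actual mechanism is different: by discreteness, $\sigma'=F'|_{\Delta(\mathcal A/I)}$ for some $F'\in(\mathcal A/I)^{**}$; via the Schwartz-hypothesis isomorphism $(\mathcal A/I)^{*}\cong I^{\circ}$ this gives $\widetilde{F'}\in(I^{\circ})^{*}$, which is extended by Hahn--Banach to $F\in\mathcal A^{**}$; discreteness of $\Delta(\mathcal A)$ then gives $F|_{\Delta(\mathcal A)}\in\mathcal A^{**}|_{\Delta(\mathcal A)}=C_{\text{BSE}}(\Delta(\mathcal A))=\widehat{\mathrm{M}(\mathcal A)}$, and the resulting multiplier descends to $\mathcal A/I$ (using $T(I)\subseteq I$ from essentiality). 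The point is that the extension produced this way need not vanish off $h(I)$ and does not have to, since only its values on $h(I)$ survive the passage to $\widehat{T'}$. Replacing your zero-extension step by this Hahn--Banach argument at the bidual level is what closes the gap.
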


\begin{proof}
	(i) Suppose that $w \in C_{\text{BSE}}(\Delta(I))$. Note that
	$$\Delta(I)=\lbrace \varphi \vert_{I}: \varphi \in \Delta(\mathcal A)\setminus I^{\circ} \rbrace.$$
	We define $\sigma \in \Delta(\mathcal A)$ as follow 
	\begin{equation*}
	\sigma(\varphi):=\left\{
	\begin{array}{rl}
	w( \varphi \vert_{I}) & \varphi \in \Delta (\mathcal A)\setminus I^{\circ}, \\
	0\;\;\;\;\;\;\;\; & \varphi \in \Delta(\mathcal A)\cap I^{\circ}. 
	\end{array} \right.
	\end{equation*}
	We show that $\sigma \in C_{\text{BSE}}(\Delta(\mathcal A))$.
	In fact, there exists a bounded set $M$ in $I$ and 
	a positive real number $\beta_{M}$ such that for every finite number of complex-numbers 
	$c_1,\cdots, c_n$ and the same number of $\varphi_{1},\cdots, \varphi_{n}$ in $\Delta(\mathcal A)$,   
	we have 
	\begin{align*} 
	\big{\vert}  \sum_{i=1}^{n} c_i \sigma (\varphi_i) \big{\vert} &=\big{\vert}  \sum_{\varphi_i \in \Delta(\mathcal A)\setminus I^{\circ}}c_i \sigma(\varphi_i) \big{\vert} \\
	&=\big{\vert}  \sum_{\varphi_i \in \Delta(\mathcal A)\setminus I^{\circ}}c_iw(\varphi_i \vert_{I}) \big{\vert}\\
	&\leq \beta_{M}P_{M}\big{(}\sum_{\varphi_i \in \Delta(\mathcal A)\setminus I^{\circ}}c_i \varphi_i \vert_{I}\big{)} \\
	&=\beta_{M}P_{M}(\sum_{i=1}^{n}c_i \varphi_i \vert_{I}) \\
	&\leq \beta_{M}P_{M}(\sum_{i=1}^{n}c_i \varphi_i).
	\end{align*}
	Thus, $\sigma \in C_{\text{BSE}}(\Delta(\mathcal A))$.
	Since $\mathcal A$ is {\text BSE}-algebra, take $T \in \mathrm{M}(\mathcal A)$ 
	such that $\widehat{T}=\sigma$ and put 
	$S=T \vert_{I}$. Then, $S \in \mathrm{M}(I)$. Also, for any $x \in I$ and
	$\varphi \in \Delta(\mathcal A)\setminus I^{\circ},$ we have 
	\begin{align*}
	\widehat{S}(\varphi \vert_{I})\widehat{x}(\varphi \vert_{I})&=(\widehat{Sx})(\varphi \vert_{I}) 
	=(\widehat{Tx})(\varphi) 
	=\widehat{T} ( \varphi ) \widehat{x}(\varphi) \\
	&=\sigma(\varphi)\widehat{x}(\varphi \vert_{I})
	=w(\varphi \vert_{I})\widehat{x}(\varphi \vert_{I}).
	\end{align*}
	Since $\varphi \vert_{I} \in \Delta(\mathcal A)$, there exists $x \in I$ such that $ \varphi \vert_{I} (x) \neq 0$. Then, $\widehat{x}(\varphi \vert_{I})\neq 0$.   Therefore, $\widehat{S}(\varphi \vert_{I})=w(\varphi \vert_{I})$. Consequently,
	$w=\widehat{S}$, where $\widehat{S} \in \widehat{\mathrm{M}(I)}$. Hence,
	$C_{\text{BSE}}(\Delta(I))\subseteq \widehat{\mathrm{M}(I)}$. 
	
	(ii) Since $\mathcal A$ is {\text BSE} algebra, it has a bounded $\Delta$-weak approximate identity 
	and by Lemma \ref{i}, $\frac{\mathcal A}{I}$ also has an approximate identity. Thus, 
	$\widehat{\mathrm{M}(\frac{\mathcal A}{I})} \subseteq C_{\text{BSE}}(\Delta (\frac{\mathcal A}{I}))$. 
	To show the reverse inclusion, let $\sigma^{\prime}\in C_{\text{BSE}}(\Delta(\frac{\mathcal A}{I}))$. 
	Since both $\Delta(\mathcal A)$ and
	$\Delta(\frac{\mathcal A}{I})$ are discrete, 
	$$C_{\text{BSE}}(\Delta(\mathcal A))={\mathcal A}^{\ast \ast} \vert_{\Delta(\mathcal A)},$$
	and
	$$C_{\text{BSE}}(\Delta(\frac{\mathcal A}{I}))=(\frac{\mathcal A}{I})^{\ast \ast} \vert_{\Delta (\frac{\mathcal A}{I})}.$$
	Thus, we can take $F^{\prime}\in(\frac{\mathcal A}{I})^{\ast \ast}$ 
	with $F^{\prime} \vert_{\Delta (\frac{\mathcal A}{I})}=\sigma^{\prime}.$ Set $I^{\circ}=\lbrace f\in \mathcal A^{\ast}:f\vert_{I}=0 \rbrace.$ For each $f \in I^{\circ},$ define $f^{\prime}$ by the 
	relation $f^{\prime}(a^{\prime})=f(a)$ $(a^{\prime}=a+I\in\frac{\mathcal A}{I}).$ The map $f\mapsto f^{\prime}$ is an isometric isomorphism of $I^{\circ}$ onto $(\frac{\mathcal A}{I})^{\ast}.$ We define $\widetilde{F^{\prime}}$ by the relation $\widetilde{F^{\prime}}(f)=F^{\prime}(f^{\prime})$ $(f \in I^{\circ}).$ Then, $\widetilde{F^{\prime}} \in ({I^{\circ}})^{\ast}$ and there exists $F \in{\mathcal A}^{\ast \ast}$ such that $r_{\ell}(F)=r_{\ell}(\widetilde{F^{\prime}})$ and $F\vert_{I^{\circ}}=\widetilde{F^{'}}$ by using
	the general framework of the Hahn-Banach theorem
	\cite[Proposition 22.12]{9}. 
	By the 
	{\text BSE} property of $\mathcal A$, there exists $T\in \mathrm{M}(\mathcal A)$ with 
	$\widehat{T}=F\vert_{\Delta(\mathcal A)}$. Since $I$ is essential 
	and $T(ab)=aTb$ for all $a, b \in \mathcal A$, we have $T(I)\subseteq I$. Then, $T^{\prime}$
	defined by $T^{\prime}(a^{\prime})=(Ta)^{\prime}$ $(a\in \mathcal A)$ belongs to $\mathrm{M}(\frac{\mathcal A}{I})$. 
	In this case, $\widehat{T^{\prime}}=\sigma^{\prime}$. Actually for any $a \in \mathcal A$ and $\varphi \in \Delta(\mathcal A)\bigcap I^{\circ},$ we have
	\begin{align*}
	\widehat{a}(\varphi) \widehat{T^{\prime}}(\varphi^{\prime})&=\widehat{a^{\prime}}(\varphi^{\prime})\widehat
	{T^{\prime}}(\varphi ^{\prime})=
	\widehat{(T^{\prime}a^{\prime})}(\varphi^{\prime})\\
	&=\widehat{(Ta)^{\prime}}(\varphi^{\prime})=\widehat{(Ta)}(\varphi)=\widehat{a}(\varphi)\widehat{T}(\varphi),
	\end{align*} 
	so that $\widehat{T^{\prime}}=\widehat{T}(\varphi)$ and 
	\begin{center}
		$\sigma^{\prime}(\varphi^{\prime})=F^{\prime}(\varphi^{\prime})=\widetilde{F^{\prime}}(\varphi)=F(\varphi)=
		\widehat{T}(\varphi)=\widehat{T^{\prime}}(\varphi ^{\prime}).$
	\end{center}
	Therefore, $\sigma^{\prime}=\widehat{T^{\prime}}$, since ${\Delta(\frac{\mathcal A}{I})}
	=\lbrace \varphi^{\prime}:\varphi \in \Delta(\mathcal A)\bigcap I^{\circ}\rbrace.$ 
	Thus, $\sigma^{\prime} \in \widehat{\mathrm{M}({\frac{\mathcal A}{I}})}$. 
	Hence, $C_{\text{BSE}}(\Delta (\frac{\mathcal A}{I})) \subseteq \widehat{\mathrm{M}(\frac{\mathcal A}{I})}.$ 
	Consequently, $\frac{\mathcal A}{I}$ is {\text BSE}.
\end{proof}

\begin{corollary}
	In above Theorem, if $I$ has a bounded $\Delta$-weak approximate identity, then $I$ is a {\text BSE} algebra.
\end{corollary}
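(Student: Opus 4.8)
The plan is simply to concatenate part (i) of the preceding Theorem with the general characterization of the existence of a bounded $\Delta$-weak approximate identity recalled in the Introduction, applied to $I$ itself. Recall first that $I$, equipped with the restricted seminorms $(p_\ell|_I)$, is a commutative Fr\'echet algebra in its own right, so that $C_{\text{BSE}}(\Delta(I))$ is a well-defined commutative semisimple Fr\'echet algebra and $\widehat{\mathrm{M}(I)}$ is a subalgebra of $C_b(\Delta(I))$; all of the BSE machinery therefore applies to $I$ verbatim.

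First I would invoke part (i) of the above Theorem, which, under the standing hypotheses that $\mathcal A$ is a BSE-Fr\'echet Schwartz algebra with discrete carrier space and that $I$ is an essential closed ideal, already yields
$$
C_{\text{BSE}}(\Delta(I)) \subseteq \widehat{\mathrm{M}(I)}.
$$
For the opposite inclusion I would use the new hypothesis of the corollary, namely that $I$ possesses a bounded $\Delta$-weak approximate identity. By the equivalence stated in the Introduction (with $I$ in the role of $\mathcal A$), the existence of such a net is equivalent to
$$
\widehat{\mathrm{M}(I)} \subseteq C_{\text{BSE}}(\Delta(I)).
$$
Combining the two inclusions gives $\widehat{\mathrm{M}(I)} = C_{\text{BSE}}(\Delta(I))$, which is precisely the assertion that $I$ is a BSE-algebra.

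I do not anticipate any real obstacle here, since the statement is a direct corollary. The only point meriting a moment's care is to read $I$ as a Fr\'echet algebra in its own right, so that the cited equivalence between a bounded $\Delta$-weak approximate identity and the inclusion $\widehat{\mathrm{M}(I)}\subseteq C_{\text{BSE}}(\Delta(I))$ is legitimately applicable, and to note that the boundedness of the approximate identity is automatically boundedness with respect to each seminorm $p_\ell|_I$, as built into the definition. Everything else is the formal juxtaposition of the two inclusions already established.
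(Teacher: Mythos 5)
Your argument is correct and is precisely the intended derivation: part (i) of the Theorem gives $C_{\text{BSE}}(\Delta(I))\subseteq\widehat{\mathrm{M}(I)}$, and the equivalence recalled in the Introduction, applied to the Fr\'echet algebra $(I,p_\ell|_I)$, turns the hypothesis of a bounded $\Delta$-weak approximate identity into the reverse inclusion $\widehat{\mathrm{M}(I)}\subseteq C_{\text{BSE}}(\Delta(I))$. The paper leaves this corollary without proof, and your two-inclusion juxtaposition is exactly the argument it presupposes.
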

\footnotesize

\vspace{9mm}

{\footnotesize \noindent

\noindent
 M. Amiri\\
Department of Pure Mathematics,
   University of Isfahan,
    Isfahan, Iran\\
    mitra.amiri@sci.ui.ac.ir\\ 
    mitra75amiri@gmail.com\\
     
\noindent
 A. Rejali\\
Department of Pure Mathematics,
   University of Isfahan,
    Isfahan, Iran\\
    rejali@sci.ui.ac.ir\\

\end{document}